\documentclass[12pt,reqno]{amsart}
\usepackage{amsmath,amsfonts,amssymb,amsthm}
\DeclareMathOperator{\csch}{csch}
\usepackage{graphicx}
\usepackage{xcolor}
\graphicspath{ }
\usepackage{wrapfig}
\usepackage[urlcolor=violet]{hyperref}
\usepackage{accents}
\usepackage{caption}
\usepackage{subcaption}
\usepackage{calligra}

\numberwithin{figure}{section}

\theoremstyle{plain}
\newtheorem{thm}{Theorem}[section]
\newtheorem{lem}[thm]{Lemma}

\newtheorem{cor}{Corollary}[thm]
\theoremstyle{definition}

\newtheorem{exmp}{Example}[section]

\theoremstyle{remark}

\usepackage{mathtools}
\title{On triviality and scalar curvature estimation of gradient $h$-almost Yamabe solitons}
\author[A. A. Shaikh and A. Tripathi]{Absos Ali Shaikh$^{*1}$, Ananya Tripathi$^{2}$}
\address{$^{1,2}$Department of Mathematics,\newline University of
Burdwan, Golapbag,\newline Burdwan-713104,\newline West Bengal, India.}
\email{$^1$aask2003@yahoo.co.in, aashaikh@math.buruniv.ac.in}
\email{$^2$ananyatripathi999@gmail.com}

\begin{document}

\begin{abstract}
	In this study we have explored gradient $h$-almost Yamabe solitons on both compact and complete non-compact Riemannian manifolds. We have established several sufficient conditions for the triviality of such solitons with respect to integral inequalities involving the scalar curvature and the soliton function. In this regard, we have acquired scalar curvature estimation under certain $L^2$-integrability conditions, and defined signal of the function $h$. Our results extend and refine former works on almost and $h$-almost Yamabe solitons, and characterize the geometric structures of generalized Yamabe solitons.  
\end{abstract}

\noindent\footnotetext{ $^*$ Corresponding author.\\
$\mathbf{2010}$\hspace{5pt}Mathematics\; Subject\; Classification: 53C21; 53C24; 53C65.\\ 
{Key words and phrases: gradient h-almost Yamabe soliton; scalar curvature; concircular scalar field; Riemannian manifold.} }
\maketitle

\section{Introduction}
The concept of Yamabe flow was originated by Hamilton \cite{RS1988, Hamilton:89} for resolving the Yamabe conjecture. The evolution equation of Yamabe flow on a Riemannian manifold is as follows:
\begin{equation}\label{Yamabe flow}
   \begin{cases}
	 \frac{\partial g(t)}{\partial t}=-R_{g(t)}g(t),\\
	  g(0) = g_0,
   \end{cases}
\end{equation} 
where $g(t)$ is a one-parameter family of metrics on the manifold and $R_{g(t)}$ denotes the scalar curvature corresponding to $g(t)$.

A natural generalization of self-similar solutions to the Yamabe flow leads to the notion of Yamabe solitons. A complete connected Riemannian manifold $(M^n, g)$, $n \geq 2$, is termed as a Yamabe soliton if it possesses a smooth vector field $X$ satisfying
\begin{equation}\label{Yamabe soliton}
   \frac{1}{2}\pounds_Xg=Rg-\lambda g,
\end{equation}
for some constant $\lambda \in \mathbb{R}$. Here, $\pounds_X$ is the Lie derivative in the direction of the vector field $X$ and $R$ is the scalar curvature corresponding to $g$. 
The Yamabe soliton is called a gradient Yamabe soliton if $X=\nabla f$, where $f:M^n\to\mathbb{R}$ is some smooth scalar field, and in this case the equation \eqref{Yamabe soliton} can be rewritten as
\begin{equation}\label{Gradient Yamabe soliton}
\nabla^2 f=Rg-\lambda g,
\end{equation}
where $\nabla^2 f$ is the Hessian of the potential function $f$.

Barbosa and Riberio \cite{BR13} initiated the study of almost Yamabe soliton by replacing the soliton constant $\lambda$ to a smooth scalar field on $M^n$. Later in 2021, Zeng \cite{ZE21} extended this conception to $h$-almost Yamabe soliton, providing a broader framework by incorporating a smooth scalar field $h$. An $h$-almost  Yamabe soliton, denoted by $(M^n,g,X,h,\lambda)$ is a complete Riemannian manifold $(M^n, g)$, equipped with a smooth vector field $X$, and satisfying the equation
\begin{equation}\label{$h$-almost Yamabe soliton}
	\frac{h}{2}\pounds_Xg=Rg-\lambda g,
\end{equation}
for some smooth scalar fields $h$ and $\lambda$. The function $h: M^n \to R$ is said to have defined signal if, either $h>0$, or $h<0$. If $\lambda$ is constant then the soliton is termed as $h$-Yamabe soliton.
When $\pounds_Xg = \pounds_{\nabla f}g$, for any smooth scalar field $f$ on $M^n$, the fundamental equation \eqref{$h$-almost Yamabe soliton} takes the form
\begin{equation}\label{E1}
	h\nabla^2 f=Rg-\lambda g,
\end{equation}
and in this instance, the soliton is termed as the $h$-almost gradient Yamabe soliton with potential function $f$, and is denoted by $(M^n,g,f,h,\lambda)$. In case the potential field $X$ is Killing or the potential function $f$ is constant, the soliton is considered to be trivial, and non-trivial otherwise. It should be mentioned that an $h$-almost Yamabe soliton is expanding, shrinking, or steady according as $\lambda<0$, $\lambda>0$, or $\lambda=0$ on $M^n$, respectively, and that is undefined if $\lambda$ has no specific sign.
\\\noindent In terms of local coordinates equation \eqref{E1} can be expressed as
\begin{equation}\label{E2}
h\nabla_i \nabla_j f = Rg_{ij}- \lambda g_{ij}.
\end{equation} 

We have derived some examples of $h$-almost gradient Yamabe soliton, including the case of expanding and shrinking.
\begin{exmp}
	Let us consider the warped product manifold 
	$M^n = I \times {}_{\cosh t} \mathbb{H}^{(n-1)}$ with metric $g = dt^2 + \cosh^2t g_0$, where $I= (0,\infty)$, and $(\mathbb{H}^{(n-1)},g_0)$ is the $(n-1)$-dimensional hyperbolic space. Then, we can show that $Ric_g = -(n-1)g$, and hence the scalar curvature of $(M^n,g)$ is given by, $R= -n(n-1)$.\\
	Now, let us define a smooth scalar field $u: M^n \to \mathbb{R}$ by $u(t,x) = \sinh t$. Then, we can compute
	  \begin{equation*}
		 \nabla^2 u = \sinh tg, \quad 
		 \text{i.e.,} \quad	\csch t\nabla^2 u = g
	  \end{equation*}
	Therefore, $\csch t\nabla^2 u = (R- \lambda))g$, where $\lambda = -(n^2-n+1) < 0$. So, $(M^n,g,\nabla u, \csch t, \lambda)$ is a non-compact expanding $h$-almost gradient Yamabe soliton.\\
	Similarly, $((0,\frac{\pi}{2})\times {}_{\sin t} \mathbb{S}^{(n-1)},dt^2 + \sin^2t g_0,\nabla \cos t,\frac{1}{\cos t}, (n^2-n+1))$ is an example of non-compact shrinking $h$-almost gradient Yamabe soliton. To be precise, here
	\begin{eqnarray*}
		&&Ric_g = (n-1)g \quad \text{i.e.,} \quad R= n(n-1),\\
	\text{and}
	    &&\nabla^2 \cos t = - \cos tg, \quad 
	    \text{i.e.,} \quad \frac{1}{\cos t}\nabla^2 \cos t = -g. 
	\end{eqnarray*}
\end{exmp}

\begin{exmp}
Let us consider $\mathbb{R}^3$ equipped with standard flat metric $g = dx^2 + dy^2 + dz^2$, and let $u: \mathbb{R}^3 \to \mathbb{R}$ be defined by
	\begin{equation*}
		u(x,y,z) = a(x^2+y^2+z^2) + b(x+y+z) + c,
	\end{equation*}
	for some nonzero constants $a$, $b$ and $c$. Then it is easy to check that $\nabla^2 u = 2ag$.\\
Hence, $(\mathbb{R}^3,g,\nabla u,\frac{1}{a},-2)$ is an $h$-almost gradient Yamabe soliton, where $h$ is constant function.  
\end{exmp}

There is a well-known result due to Chow, Lu and Ni \cite{CLN06} (see also \cite{CD08} and \cite{SYH2012}, for alternative proofs of this result) asserting that any compact gradient Yamabe soliton has constant scalar curvature $R=\lambda$, and hence it is trivial. For the complete non-compact case the soliton is trivial under certain additional conditions (See, for instance \cite{LV2012}, \cite{Maeta}, \cite{MC21}).\\

Zeng \cite{ZE21} combined the quasi Yamabe soliton, initiated by Leandro and Pina \cite{LP16}, with $h$-almost Yamabe soliton for the gradient case. As a matter of fact, any almost $m$-quasi gradient Yamabe soliton with potential function $u$ is actually an $h$-almost gradient Yamabe soliton admitting potential function $e^{-\frac{u}{m}}$, and $h =-me^{\frac{u}{m}}$ is of negative signal. In recent times, Cunha and Siddiqi \cite{CS23} have also derived certain results on scalar curvature of such gradient ($-me^{\frac{u}{m}}$)-almost Yamabe solitons.\\

In this framework we have established certain conditions in respect to which triviality of the soliton occurs in both compact (without boundary) and complete non-compact cases, and have estimated the scalar curvature  under suitable integral and geometric constraints. Henceforth, we have  considered $h$ to be of defined signal. In the context of \cite{ZE21}, for the case of $h<0$, these outcomes are also applicable for almost quasi gradient Yamabe soliton.\\

Our first outcome provides some sufficient integral conditions for the potential vector field $\nabla f$ to be Killing. In the first case we have modified the integral condition imposed by Zeng \cite[Theorem 1.2.]{ZE21} through omitting an extra term in the condition. In the remaining two cases, we have considered defined sign of the soliton function $\lambda$.
\begin{thm}\label{T1}
	For any compact $h$-almost gradient Yamabe soliton possessing potential function $f$, if any one of the following integral conditions holds then the soliton is trivial:
	\begin{enumerate}
		\item[(i)]$\int_M Ric(\nabla f,\nabla f)d\mu \leq 0$;
		\item[(ii)] The soliton is expanding, $R \leq \lambda,$ and $\int_M \langle \nabla f,\nabla \frac{1}{h}R\rangle d\mu \geq 0$;
		\item[(iii)] The soliton is shrinking, $R \geq \lambda,$ and $\int_M \langle \nabla f,\nabla \frac{1}{h}R\rangle d\mu \geq 0$.
		\end{enumerate}
\end{thm}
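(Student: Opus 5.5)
The plan is to recast the soliton equation as a concircular-type equation and then integrate a Bochner-type identity. Since $h$ has defined signal, $1/h$ is smooth, and \eqref{E1} becomes
\begin{equation*}
\nabla^2 f=\varphi\, g,\qquad \varphi:=\frac{R-\lambda}{h},
\end{equation*}
so that $\Delta f=n\varphi$. Triviality will follow once we show $\varphi\equiv 0$. Indeed, then $\nabla^2 f=0$, so $\nabla f$ is parallel and hence Killing. Alternatively, $\Delta f=0$ on a compact manifold without boundary forces $f$ to be constant.

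\textbf{Step 1 (key identity).} Use the Ricci commutation formula $\nabla_j\nabla_j\nabla_i f=\nabla_i\Delta f+R_{ij}\nabla_j f$. With $\nabla^2 f=\varphi g$, the left side is $\operatorname{div}(\nabla^2 f)_i=\nabla_i\varphi$, and $\nabla_i\Delta f=n\nabla_i\varphi$. Hence
\begin{equation*}
Ric(\nabla f,\cdot)=-(n-1)\,d\varphi,\qquad Ric(\nabla f,\nabla f)=-(n-1)\langle\nabla\varphi,\nabla f\rangle .
\end{equation*}
Integrating by parts on the closed manifold gives
\begin{equation*}
\int_M\langle\nabla f,\nabla\varphi\rangle d\mu=-\int_M\varphi\,\Delta f\,d\mu=-n\int_M\varphi^2 d\mu ,
\end{equation*}
and therefore
\begin{equation*}
\int_M Ric(\nabla f,\nabla f)\,d\mu=n(n-1)\int_M\varphi^2\,d\mu .
\end{equation*}

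\textbf{Step 2 (case (i)).} Under hypothesis (i), the last identity gives $\int_M\varphi^2\le 0$. Hence $\varphi\equiv0$, and the soliton is trivial as explained above.

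\textbf{Step 3 (cases (ii), (iii)).} Split $\varphi=\frac{R}{h}-\frac{\lambda}{h}$ and write
\begin{equation*}
-n\int_M\varphi^2 d\mu=\int_M\Big\langle\nabla f,\nabla\tfrac{R}{h}\Big\rangle d\mu-\int_M\Big\langle\nabla f,\nabla\tfrac{\lambda}{h}\Big\rangle d\mu .
\end{equation*}
The first term on the right is $\ge 0$ by hypothesis. For the second term, integrate by parts once more:
\begin{equation*}
\int_M\Big\langle\nabla f,\nabla\tfrac{\lambda}{h}\Big\rangle d\mu=-\int_M\frac{\lambda}{h}\Delta f\,d\mu=-n\int_M\frac{\lambda(R-\lambda)}{h^2}\,d\mu .
\end{equation*}
In case (ii) we have $\lambda<0$ and $R-\lambda\le0$; in case (iii) we have $\lambda>0$ and $R-\lambda\ge0$. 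In both cases $\lambda(R-\lambda)\ge0$, so this integral is $\le 0$. Consequently the right-hand side of the split identity is $\ge0$, while the left-hand side is $\le0$. Thus $\int_M\varphi^2=0$, so $\varphi\equiv0$ and the soliton is trivial.

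The only delicate point is Step 1, namely getting the sign and the factor $(n-1)$ in the commutation formula right. This is a standard computation for concircular fields. Everything afterwards consists of integrations by parts, which are legitimate because $h$ has defined signal and $M$ is compact without boundary.
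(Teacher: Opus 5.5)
Your proposal is correct and follows essentially the same route as the paper: the identity $Ric(\nabla f)=-(n-1)\nabla\frac{1}{h}(R-\lambda)$, combined with the integrated Bochner formula (equivalently, your integration by parts), gives $\int_M Ric(\nabla f,\nabla f)\,d\mu=n(n-1)\int_M\frac{1}{h^2}(R-\lambda)^2\,d\mu$, and cases (ii)--(iii) use the same divergence identity for $\frac{\lambda}{h}\nabla f$. The only difference is cosmetic: the paper rearranges to $n\int_M\frac{1}{h^2}R(R-\lambda)\,d\mu\le 0$ and uses $R(R-\lambda)\ge 0$ together with $R\neq 0$, whereas you keep $\int_M\varphi^2\,d\mu$ and use $\lambda(R-\lambda)\ge 0$; this is the same bookkeeping via $(R-\lambda)^2=R(R-\lambda)-\lambda(R-\lambda)$, and it spares you the $R\neq 0$ step.
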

Next, we have prescribed some sufficient integral relations involving scalar curvature and soliton function to obtain triviality.  
\begin{thm}\label{T2}
	Assume that $(M^n,g,f,h,\lambda)$ be a compact $h$-almost gradient Yamabe soliton of dimension $n(\geq 3 )$. Then the soliton becomes trivial if any one of the following holds:
	\begin{enumerate}
		\item[(i)] $h>0,$ and $\int_M \frac{1}{h}R^2 d\mu \geq \int_M \frac{1}{h}\lambda^2 d\mu$;
		\item[(ii)] $h<0,$ and $\int_M \frac{1}{h}\lambda^2 d\mu \geq \int_M \frac{1}{h}R^2 d\mu$.
	\end{enumerate}
\end{thm}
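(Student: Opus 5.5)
The plan is to turn the soliton equation into a pointwise identity for $\nabla^2 f$ and extract from it one integral identity, namely $\int_M \frac{1}{h}R(R-\lambda)\,d\mu=0$. This identity makes the hypotheses in (i) and (ii) force $R\equiv\lambda$.

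\emph{Step 1: rewrite the equation.} Since $h$ has defined signal, \eqref{E1} becomes $\nabla^2 f=\varphi g$ with $\varphi=\frac{R-\lambda}{h}$, so $\Delta f=n\varphi$. Use the commutation formula $\operatorname{div}(\nabla^2 f)=\nabla\Delta f+Ric(\nabla f)$. The left side equals $\nabla\varphi$, hence
\begin{equation*}
Ric(\nabla f)=-(n-1)\nabla\varphi .
\end{equation*}

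\emph{Step 2: take the divergence (the key step).} Use the contracted Bianchi identity $\operatorname{div}Ric=\frac12\nabla R$. The divergence of the left side is $\frac12\langle\nabla R,\nabla f\rangle+\langle Ric,\nabla^2 f\rangle=\frac12\langle\nabla R,\nabla f\rangle+R\varphi$. The divergence of the right side is $-(n-1)\Delta\varphi$. Integrate over the compact manifold $M$, which has no boundary, so $\int_M\Delta\varphi\,d\mu=0$. Then integrate by parts, $\int_M\langle\nabla R,\nabla f\rangle\,d\mu=-\int_M R\Delta f\,d\mu=-n\int_M R\varphi\,d\mu$. Together these give
\begin{equation*}
\Bigl(1-\frac n2\Bigr)\int_M R\varphi\,d\mu=0 .
\end{equation*}
Since $n\ge 3$, this yields $\int_M \frac{1}{h}R(R-\lambda)\,d\mu=0$, that is, $\int_M\frac1h R^2\,d\mu=\int_M\frac1h R\lambda\,d\mu$. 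This is the only place where the dimension assumption enters. The step I would check most carefully is the sign and the factor $\frac12$ in this computation.

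\emph{Step 3: conclude.} Expand the square and use the identity from Step 2:
\begin{equation*}
\int_M\frac{(R-\lambda)^2}{h}\,d\mu=\int_M\frac{R^2-2R\lambda+\lambda^2}{h}\,d\mu=\int_M\frac{\lambda^2-R^2}{h}\,d\mu .
\end{equation*}
In case (i), $h>0$ makes the left side nonnegative, while the hypothesis makes the right side nonpositive. Both sides therefore vanish, so $R\equiv\lambda$. Case (ii) is symmetric: with $h<0$ the left side is nonpositive and the hypothesis makes the right side nonnegative.

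In either case $\varphi\equiv0$, so $\nabla^2 f=0$ and in particular $\Delta f=0$. On a compact manifold without boundary this forces $f$ to be constant, so the soliton is trivial. Alternatively, one can conclude from $\nabla^2 f=0$ that $\nabla f$ is Killing, as in the definition of triviality.
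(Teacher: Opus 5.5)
Your proof is correct and follows essentially the same route as the paper. You derive $Ric(\nabla f)=-(n-1)\nabla\frac{1}{h}(R-\lambda)$, take its divergence to get \eqref{E10}, integrate, and compare with $\int_M R\Delta f\,d\mu=-\int_M\langle\nabla f,\nabla R\rangle\,d\mu$; using $n\ge 3$ this gives $\int_M\frac{1}{h}R(R-\lambda)\,d\mu=0$, and you then expand $(R-\lambda)^2$ exactly as the paper does. The differences are only cosmetic: you obtain \eqref{E8} from the commutation formula $\operatorname{div}\nabla^2 f=\nabla\Delta f+Ric(\nabla f)$ rather than from a frame computation, and you spell out why $R\equiv\lambda$ forces $f$ to be constant.
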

Further, in the remaining two results we have observed the complete non-compact $h$-almost gradient Yamabe soliton under previous analogous conditions along with some extra $L^2$-regularity criteria. The following result is motivated by \cite{HY25}.
\begin{thm}\label{T3}
Suppose $(M^n,g,f,h,\lambda)$ be a complete non-compact $h$-almost gradient Yamabe soliton such that 
$|\nabla f|, \frac{1}{h}R \in L^2(M), 0 \leq R \leq \lambda$, and $\langle \nabla f,\nabla \frac{1}{h}\lambda \rangle \leq 0$. Then the soliton is trivial.
\end{thm}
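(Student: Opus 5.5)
The plan is to work with the traced soliton equation and a cutoff-function integration by parts; the main difficulty will be controlling the cross terms against cutoffs using only the stated $L^2$ hypotheses. Write $\psi=\frac{1}{h}(R-\lambda)$, so that \eqref{E1} reads $\nabla^2 f=\psi g$ and its trace gives $\Delta f=n\psi$. Since $0\le R\le \lambda$, both $R(R-\lambda)\le 0$ and $\lambda(R-\lambda)\le 0$. Because $h$ has defined signal, $h^2>0$, so $\frac{R}{h}\cdot\psi=\frac{R(R-\lambda)}{h^2}\le 0$ and $\frac{\lambda}{h}\cdot \psi \le 0$ pointwise.

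Fix $p\in M$ and let $\varphi=\varphi_r$ be the usual cutoff: $\varphi\equiv 1$ on $B_p(r)$, $\varphi\equiv 0$ off $B_p(2r)$, $0\le\varphi\le 1$, $|\nabla\varphi|\le C/r$. Multiplying $\Delta f=n\psi$ by $\varphi^2\frac{\lambda}{h}$ and integrating by parts gives
\begin{equation*}
n\int_M \varphi^2\frac{\lambda(R-\lambda)}{h^2}\,d\mu=-\int_M\varphi^2\Big\langle \nabla f,\nabla \tfrac{\lambda}{h}\Big\rangle d\mu-2\int_M\varphi\frac{\lambda}{h}\langle\nabla\varphi,\nabla f\rangle d\mu .
\end{equation*}
The left side is $\le 0$ and, by hypothesis, the first term on the right is $\ge 0$. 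I would do the same with the multiplier $\varphi^2\frac{R}{h}$. For that identity the boundary term is bounded by $\frac{2C}{r}\|\tfrac{R}{h}\|_{L^2}\|\nabla f\|_{L^2}$, which tends to $0$ as $r\to\infty$ precisely because $|\nabla f|,\frac{R}{h}\in L^2(M)$.

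To handle the gradient terms $\langle\nabla f,\nabla\frac{R}{h}\rangle$, I would use the standard commutation $\operatorname{div}\nabla^2 f=\nabla\Delta f+Ric(\nabla f)$. Applied to $\nabla^2 f=\psi g$, this gives $\nabla\psi=n\nabla\psi+Ric(\nabla f)$, that is,
\begin{equation*}
(n-1)\nabla\psi=-Ric(\nabla f).
\end{equation*}
Hence $\langle\nabla f,\nabla\frac{R}{h}\rangle=\langle\nabla f,\nabla\frac{\lambda}{h}\rangle-\frac{1}{n-1}Ric(\nabla f,\nabla f)$. This lets me trade the unknown $R/h$-gradient term for the hypothesised one $\langle\nabla f,\nabla\frac{\lambda}{h}\rangle\le0$ plus a Ricci term. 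Taking the difference of the two identities then yields an identity for $n\int\varphi^2\psi^2\,d\mu$ in terms of $\int\varphi^2\langle\nabla f,\nabla\psi\rangle\,d\mu$ and boundary terms.

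The main obstacle is the boundary term $\int\varphi\frac{\lambda}{h}\langle\nabla\varphi,\nabla f\rangle\,d\mu$, since $\lambda/h$ is not assumed to be in $L^2$. First I would try to avoid using $\frac{\lambda}{h}$ as a multiplier at all, working only with $\frac{R}{h}$, the Ricci identity above, and the sign information $R\le\lambda$. Failing that, I would exploit $0\le R\le\lambda$ to compare $|\lambda/h|$ with $|\psi|+|R/h|$, and control $\psi$ on $B_p(2r)$ through $\Delta f=n\psi$ and $|\nabla f|\in L^2$. Once the boundary terms vanish as $r\to\infty$, the sign information forces $\int_M \psi^2\,d\mu=0$ (or $\int_M R(R-\lambda)/h^2\,d\mu=0$). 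Then $\nabla^2 f=\psi g=0$, so $\nabla f$ is parallel; together with $|\nabla f|\in L^2$ on a complete non-compact manifold, $\nabla f$ must vanish unless the volume is finite. In that finite-volume case, $\Delta f=0$ combined with the vanishing of $\int\varphi^2|\nabla f|^2$-type terms gives $\nabla f=0$. Hence $f$ is constant and the soliton is trivial.
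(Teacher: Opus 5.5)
There is a genuine gap. Your setup is essentially the paper's: the difference of your two cut-off identities is exactly
\[
n\int_M\varphi^2\psi^2\,d\mu=-\int_M\varphi^2\langle\nabla f,\nabla\psi\rangle\,d\mu-2\int_M\varphi\,\psi\,\langle\nabla\varphi,\nabla f\rangle\,d\mu .
\]
Your ingredients for the first term on the right are also the paper's. You split $\nabla\psi=\nabla\frac{R}{h}-\nabla\frac{\lambda}{h}$ and drop the $\frac{\lambda}{h}$ part by hypothesis. Then the $\frac{R}{h}$-identity together with $R(R-\lambda)\le 0$ bounds $-\int\varphi^2\langle\nabla f,\nabla\frac{R}{h}\rangle$ by $\frac{2C}{r}\|\frac{R}{h}\|_{L^2}\|\nabla f\|_{L^2}$.

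The Ricci identity is a detour. $Ric(\nabla f,\nabla f)$ has no sign, and rewriting it via $(n-1)\nabla\psi=-Ric(\nabla f)$ just returns you to the identity above.

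The proposal stops exactly at the one non-routine step: the term $2\int\varphi\psi\langle\nabla\varphi,\nabla f\rangle$, which is equivalent to the $\frac{\lambda}{h}$ boundary term you flagged. Neither fallback plan handles it. You cannot ``control $\psi$ on $B_p(2r)$ through $\Delta f=n\psi$ and $|\nabla f|\in L^2$'': an $L^2$ bound on $\nabla f$ gives no bound on $\Delta f$, and $\psi$ is not assumed integrable. So this term cannot be shown to vanish as $r\to\infty$ on its own, and it does not need to.

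The missing idea is absorption. By Cauchy--Schwarz and Young,
\[
-2\int_M\varphi\,\psi\,\langle\nabla\varphi,\nabla f\rangle\,d\mu\le\int_M\varphi^2\psi^2\,d\mu+\int_M|\nabla\varphi|^2|\nabla f|^2\,d\mu\le\int_M\varphi^2\psi^2\,d\mu+\frac{C^2}{r^2}\|\nabla f\|_{L^2}^2 .
\]
The quantity $\int_M\varphi^2\psi^2\,d\mu$ is finite because $\varphi$ has compact support. So the first term can be moved to the left, giving $(n-1)\int_{B_p(r)}\psi^2\,d\mu\le \frac{c_1}{r}+\frac{c_2}{r^2}$. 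Hence $\psi\equiv 0$, i.e.\ $R=\lambda$.

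Two smaller points:
\begin{itemize}
\item The alternative you mention, $\int_M R(R-\lambda)/h^2\,d\mu=0$, would only give $R\in\{0,\lambda\}$ pointwise. That does not yield triviality.
\item Once $\nabla^2 f=0$ you are already done, because the paper counts a Killing $\nabla f$ as trivial. Your extra argument that $f$ is constant is unnecessary, and its finite-volume branch is only sketched.
\end{itemize}
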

Here it is needed to be mentioned that, $L^2(M)$ stands for the space of all real valued functions defined on $M^n$ whose square is integrable over   $M^n$.
\begin{thm}\label{T4}
	Assume that $(M^n,g,f,h,\lambda)$ be a complete non-compact $h$-almost gradient Yamabe soliton of dimension $n$ $(\geq 3 )$ with $R, |\nabla f|,|\nabla^2 f| \in L^2(M)$. Then the soliton is either scalar flat, or trivial provided any one of the following conditions holds:
	\begin{enumerate}
		\item[(i)] $h>0, \text{ and } R^2 \geq R\lambda$;
		\item[(i)] $h<0, \text{ and } R^2 \leq R\lambda$.
	\end{enumerate}
\end{thm}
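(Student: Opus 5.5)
The plan is to integrate the pointwise quantity $R\Delta f$ against cut-off functions, to use the sign hypothesis (i) or (ii) to make the integrand nonnegative, and to show that the integral vanishes in the limit.

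\emph{Step 1 (trace and sign).} Tracing \eqref{E1} gives $h\Delta f=n(R-\lambda)$, so
\[
R\Delta f=\frac{n}{h}\,R(R-\lambda)=\frac{n}{h}\,(R^2-R\lambda).
\]
Under (i) ($h>0$, $R^2\ge R\lambda$) and under (ii) ($h<0$, $R^2\le R\lambda$) we get $R\Delta f\ge 0$ on $M^n$. Also $(\Delta f)^2\le n|\nabla^2 f|^2$. Hence, by Cauchy--Schwarz and $R,|\nabla^2 f|\in L^2(M)$, the function $R\Delta f$ lies in $L^1(M)$.

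\emph{Step 2 (cut-off argument).} Fix $p\in M$ and let $\varphi_r$ be the standard cut-off: $\varphi_r\equiv1$ on $B(p,r)$, $\varphi_r\equiv 0$ outside $B(p,2r)$, and $|\nabla\varphi_r|\le C/r$. Integrating by parts,
\[
\int_M\varphi_r^2R\Delta f\,d\mu=-\int_M\varphi_r^2\langle\nabla R,\nabla f\rangle d\mu-2\int_M\varphi_rR\langle\nabla\varphi_r,\nabla f\rangle d\mu .
\]
The last term is bounded by $\frac{2C}{r}\|R\|_{L^2}\|\nabla f\|_{L^2}$, so it tends to $0$ as $r\to\infty$.

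To control $\langle\nabla R,\nabla f\rangle$, I would use the divergence of \eqref{E1}, namely
\[
\nabla\Delta f+Ric(\nabla f)=\nabla\Big(\frac{R-\lambda}{h}\Big),
\]
which combined with the trace gives $Ric(\nabla f)=-\frac{n-1}{n}\nabla\Delta f$; here $n\ge3$ keeps the coefficients nondegenerate. The aim is to trade the derivative of $R$ for second derivatives of $f$, again integrating by parts against $\varphi_r^2$. The resulting terms are of the form $\int\varphi_r^2|\nabla^2 f|\,|R|$ and $\frac1r\int\varphi_r|\nabla f|\,|\nabla^2 f|$. These are controlled by the $L^2$ hypotheses on $R$, $|\nabla f|$ and $|\nabla^2 f|$, and the boundary-type terms vanish as $r\to\infty$. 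This step is the main obstacle: one must make sure no uncontrolled $|\nabla R|$ remains. If the direct route fails, I would instead rewrite $R$ through $R=\lambda+\frac hn\Delta f$ inside the gradient term.

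\emph{Step 3 (conclusion).} Granting Step 2, letting $r\to\infty$ and using monotone convergence, the nonnegativity from Step 1 forces $\int_M R\Delta f\,d\mu=0$. Hence $R(R-\lambda)=0$ pointwise, since $h$ has defined signal. On the open set $\{R\ne0\}$ we then have $R=\lambda$, so $\nabla^2f=0$ there. If this set is empty the soliton is scalar flat. Otherwise I would show $\nabla^2 f\equiv0$ globally: by continuity and a unique-continuation/connectedness argument for \eqref{E1} (using $n\ge 3$), $\nabla f$ is parallel, and therefore $|\nabla f|$ is constant. Since $|\nabla f|\in L^2$ on a complete non-compact manifold, which has infinite volume whenever $\nabla f\ne 0$ is parallel (the manifold then splits off a line), this forces $\nabla f=0$, so the soliton is trivial.
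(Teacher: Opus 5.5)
The gap is Step 2. You leave it open ("Granting Step 2"), and it carries the entire analytic content of the proof. Your integration-by-parts identity for $\int_M\varphi_r^2R\Delta f\,d\mu$ is the paper's \eqref{E20}. However, the relation $Ric(\nabla f)=-\frac{n-1}{n}\nabla\Delta f$ on its own never involves $\langle\nabla R,\nabla f\rangle$. The missing ingredient is the contracted second Bianchi identity, which the paper uses to obtain \eqref{E10}. Writing $u=\frac1h(R-\lambda)$,
\[
\tfrac12\langle\nabla f,\nabla R\rangle=\operatorname{div}\big(Ric(\nabla f)\big)-\langle Ric,\nabla^2 f\rangle=-(n-1)\Delta u-uR .
\]
Multiplying by $\eta_r^2$ and integrating gives a second expression for $\int_M\eta_r^2\langle\nabla f,\nabla R\rangle\,d\mu$. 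Subtracting it from \eqref{E20} removes the gradient term entirely and leaves \eqref{E21}:
\[
\Big(\frac n2-1\Big)\int_M\eta_r^2\,\frac1h R(R-\lambda)\,d\mu=(n-1)\int_M u\,\Delta\eta_r^2\,d\mu-\int_M\eta_r R\langle\nabla\eta_r,\nabla f\rangle\, d\mu .
\]

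This shows why your sketch does not close. The term coming from $\langle Ric,\nabla^2 f\rangle$ is $\int_M\varphi_r^2uR\,d\mu=\frac1n\int_M\varphi_r^2R\Delta f\,d\mu$, which is the very quantity you want to show vanishes. Bounding it by $\|R\|_{L^2}\,\|\nabla^2 f\|_{L^2}$ only gives boundedness, so it must be absorbed into the left-hand side. That is exactly where $n\ge3$ enters, through $\frac n2-1\neq0$. It does not enter in $Ric(\nabla f)=-\frac{n-1}{n}\nabla\Delta f$, which holds in every dimension.

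The other term, $\int_M\varphi_r^2\Delta u\,d\mu$, cannot be handled with only $|\nabla\varphi_r|\le C/r$. One integration by parts leaves $\nabla u=-\frac{1}{n-1}Ric(\nabla f)$, or equivalently $\nabla R,\nabla\lambda,\nabla h$, or third derivatives of $f$. None of these is controlled by the hypotheses, and your fallback substitution $R=\lambda+\frac hn\Delta f$ produces the same quantities. The paper instead integrates by parts twice and uses a Cheeger--Colding cut-off with a Laplacian bound. Even there this is the delicate term: Cauchy--Schwarz gives $\|u\|_{L^2}\,\|\Delta\eta_r^2\|_{L^2}$, and keeping the second factor under control needs a two-sided bound on $\Delta\eta_r$ together with control of $\mathrm{Vol}(\mathbb{B}_r(p))$.

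Finally, your Step 3 goes beyond the paper, which stops at the pointwise identity $R(R-\lambda)=0$. ``Trivial'' there means $R=\lambda$, i.e.\ $\nabla f$ is Killing, so showing $\nabla f=0$ is unnecessary. The unique-continuation principle you invoke is also false for \eqref{E1} in the smooth category. Take $dt^2+\psi(t)^2g_N$ with $\psi>0$ smooth, constant for $t\le 0$ and strictly increasing for $t>0$. Then $f=\int\psi\,dt$ satisfies $\nabla^2 f=\psi'(t)g$, which vanishes on an open set but not identically. This is an $h$-almost gradient Yamabe soliton with $h=1$ and $\lambda=R-\psi'$.
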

Lastly, as a consequence of Theorem \ref{T4} we have acquired the following corollary regarding Euclidean isometry of the soliton.
\begin{cor}\label{C1}
	Let a non-trivial complete non-compact $n$$(\geq 3)$-dimensional $h$-almost gradient Yamabe soliton, satisfying \eqref{E1}, admits non-negative scalar curvature $R$ with the additional conditions $R, |\nabla f|,|\nabla^2 f| \in L^2(M)$, and $\lambda= ch$, for some nonzero constant $c$. Then $R=0$, and the manifold $(M^n,g)$ is isometric with $\mathbb{R}^n$, if either $h>0$ and the soliton is expanding, or $h<0$ and $R < \lambda$.  
\end{cor}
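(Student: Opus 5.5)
The plan is to reduce the statement to Theorem \ref{T4}, use non-triviality to force scalar flatness, and then identify the manifold through the resulting Hessian equation. First I would check that the hypotheses of Corollary \ref{C1} imply one of the two alternatives in Theorem \ref{T4}; the integrability assumptions $R, |\nabla f|, |\nabla^2 f| \in L^2(M)$ and $n\geq 3$ are assumed directly.

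\emph{Case $h>0$, expanding.} Here $\lambda<0$ and $R\geq 0$, so $R\lambda \leq 0 \leq R^2$. This is condition (i) of Theorem \ref{T4}.

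\emph{Case $h<0$ and $R<\lambda$.} Multiplying $R<\lambda$ by $R\geq 0$ gives $R^2 \leq R\lambda$, which is the second condition of Theorem \ref{T4}.

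In either case Theorem \ref{T4} says the soliton is scalar flat or trivial. It is non-trivial by hypothesis, so $R\equiv 0$.

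Substituting $R=0$ and $\lambda = ch$ into \eqref{E1} gives $h\nabla^2 f = -ch\, g$. Since $h$ has defined signal, it never vanishes, so we may divide by it:
\[
\nabla^2 f = -c\, g ,
\]
with $c$ a nonzero constant. (In the second case $R=0<\lambda$, so $\lambda>0$ and $h<0$ force $c<0$. In the first case $\lambda<0$ and $h>0$ also give $c<0$. Only $c\neq 0$ is needed below.) Non-triviality means $f$ is non-constant. Thus $(M^n,g)$ is a complete Riemannian manifold carrying a non-constant function whose Hessian is a nonzero constant multiple of the metric.

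The final step, which I expect to be the main point, is to invoke Tashiro's classical theorem on concircular scalar fields: a complete connected Riemannian manifold admitting a non-constant solution of $\nabla^2 f = k g$ with $k$ a nonzero constant is isometric to Euclidean space $\mathbb{R}^n$. If a self-contained argument is preferred, I would argue as follows.
\begin{enumerate}
\item[(a)] Along any unit-speed geodesic $\gamma$, the function $f\circ\gamma$ satisfies $(f\circ\gamma)'' = -c$. Hence $f$ is a strictly convex (or, after replacing $f$ by $-f$, strictly concave) proper function, and it attains a unique critical point $p$.
\item[(b)] From $\nabla^2 f=-cg$ one gets $f = f(p) - \tfrac{c}{2}\,d(p,\cdot)^2$.
\item[(c)] Consequently $\exp_p$ is a diffeomorphism.
\item[(d)] The metric in geodesic polar coordinates about $p$ is $dr^2 + r^2 g_{\mathbb{S}^{n-1}}$, because the Hessian of $r^2/2$ equals $g$. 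This is exactly the flat metric.
\end{enumerate}
This yields $(M^n,g)\cong\mathbb{R}^n$ and completes the proof.
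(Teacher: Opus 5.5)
Your proposal is correct and follows the paper's route. The paper also gets $R\equiv 0$ from the integral identity \eqref{E21} behind Theorem~\ref{T4}, then reduces \eqref{E1} to $\nabla^2 f=-c\,g$ and applies Tashiro's theorem. One difference in how $R\equiv0$ is reached: the paper passes from $R(R-\lambda)=0$ to $R\equiv 0$ using the pointwise fact $\frac{1}{h}(R-\lambda)>0$, which holds in both cases. That is more robust than your appeal to the ``scalar flat or trivial'' dichotomy plus non-triviality, since the proof of Theorem~\ref{T4} only yields the pointwise identity $R(R-\lambda)=0$. Your sign $-c$ is the correct one; the paper's $cg$ is a harmless slip. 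In your optional self-contained sketch, step (c) still needs an argument. For example, $|\nabla f|^2=c^2d(p,\cdot)^2$ shows $d(p,\cdot)$ is smooth off $p$, so $p$ has empty cut locus.
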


\section{Preliminaries}
 In this section we have proved some general results on gradient $h$-almost Yamabe soliton that are required to prove our core results. Similar results had been proposed by Hwang and Yung \cite{HY24} on conformal vector fields in a different approach. These results also generalize the outcomes of Zeng \cite{ZE21}, concerning gradient ($-me^{\frac{u}{m}}$)-almost Yamabe soliton.
\\First recall that the potential function $f$ of an $h$-almost gradient Yamabe soliton is a concircular scalar field with characteristic function $\frac{1}{h}(R-\lambda)$. Thus, it follows from \eqref{E1} that
\begin{equation}\label{E3}
	\nabla_Z \nabla f = \frac{1}{h}(R-\lambda)Z, \text{ for any vector field } Z.
\end{equation}
Now, taking the trace of equation $(\ref{E1})$ we get
\begin{equation}\label{E4}
	\Delta f = \frac{n}{h}(R-\lambda).
\end{equation}
Also, since norm of Riemannian metric $g$ is $n$, from \eqref{E1} we have
\begin{equation}\label{E5}
	|\nabla^2 f|^2 = \frac{n}{h^2}(R-\lambda)^2.
\end{equation}
\begin{lem}\label{L1}
	Let $(M^n,g,f,h,\lambda)$ be an $h$-almost gradient Yamabe soliton. Then the following relations hold:
\begin{eqnarray}
&& \frac{1}{2}\Delta|\nabla f|^2 = \frac{n}{h^2}(R-\lambda)^2 
   + \langle \nabla f,\nabla \frac{1}{h}(R-\lambda) \rangle;\label{E7}\\
&& Ric(\nabla f) = -(n-1)\nabla \frac{1}{h}(R-\lambda);\label{E8}\\
&& \frac{1}{2}\langle \nabla f,\nabla R \rangle +
   (n-1)\Delta \frac{1}{h}(R-\lambda) + \frac{1}{h}R(R-\lambda) =0.\label{E10}
\end{eqnarray}
\end{lem}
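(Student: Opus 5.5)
Write $\psi = \frac{1}{h}(R-\lambda)$, so that \eqref{E3} reads $\nabla_Z\nabla f = \psi Z$. All three identities will come from this concircular form together with standard Bochner-type and commutation formulas.

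\textbf{Proof of \eqref{E7}.} We use the Bochner formula
\[
\frac12\Delta|\nabla f|^2 = |\nabla^2 f|^2 + \langle \nabla f,\nabla\Delta f\rangle + Ric(\nabla f,\nabla f).
\]
Direct differentiation is simpler, so we argue that way instead. For any $Z$,
\[
Z\bigl(\tfrac12|\nabla f|^2\bigr)=\langle\nabla_Z\nabla f,\nabla f\rangle=\psi\langle Z,\nabla f\rangle ,
\]
hence $\nabla\tfrac12|\nabla f|^2=\psi\nabla f$. Taking the divergence gives
\[
\tfrac12\Delta|\nabla f|^2=\langle\nabla\psi,\nabla f\rangle+\psi\Delta f .
\]
By \eqref{E4}, $\psi\Delta f=n\psi^2$, which equals $\frac{n}{h^2}(R-\lambda)^2$. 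This is \eqref{E7}, and it is consistent with \eqref{E5}.

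\textbf{Proof of \eqref{E8}.} Apply the Ricci identity to the gradient field:
\[
R(X,Y)\nabla f=\nabla_X\nabla_Y\nabla f-\nabla_Y\nabla_X\nabla f-\nabla_{[X,Y]}\nabla f .
\]
Substituting \eqref{E3}, the right-hand side becomes $(X\psi)Y-(Y\psi)X$. Now trace over $X$ against $Y$: with the convention $Ric(Y,\nabla f)=\sum_i\langle R(e_i,Y)\nabla f,e_i\rangle$ we get
\[
Ric(Y,\nabla f)=(Y\psi)-n(Y\psi)=-(n-1)Y\psi .
\]
Hence $Ric(\nabla f)=-(n-1)\nabla\psi$. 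The one point of care here is the sign convention for the curvature tensor; it is fixed by checking on the round sphere, where the result must give positive Ricci curvature.

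\textbf{Proof of \eqref{E10}.} Take the divergence of \eqref{E8}.
\begin{itemize}
\item The right-hand side gives $-(n-1)\Delta\psi$.
\item On the left, $\operatorname{div}(Ric(\nabla f))=(\operatorname{div}Ric)(\nabla f)+\langle Ric,\nabla^2 f\rangle$.
\item The contracted second Bianchi identity gives $(\operatorname{div}Ric)(\nabla f)=\tfrac12\langle\nabla R,\nabla f\rangle$.
\item By \eqref{E1}, $\langle Ric,\nabla^2 f\rangle=\psi\,\mathrm{tr}\,Ric=\psi R=\frac1h R(R-\lambda)$.
\end{itemize}
Equating the two sides yields
\[
\tfrac12\langle\nabla f,\nabla R\rangle+\tfrac1hR(R-\lambda)+(n-1)\Delta\psi=0,
\]
which is \eqref{E10}. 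The main obstacle throughout is only bookkeeping of signs and conventions; no analytic difficulty arises, and compactness or completeness is not needed since every step is local.
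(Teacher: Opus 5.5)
Your proof is correct and follows essentially the same route as the paper. You get \eqref{E7} by computing $\nabla\tfrac12|\nabla f|^2=\psi\nabla f$ and taking its divergence, \eqref{E8} by commuting covariant derivatives of $\nabla f$ and tracing, and \eqref{E10} by taking the divergence of \eqref{E8} with the contracted Bianchi identity and $\langle Ric,\nabla^2 f\rangle=\psi R$. The Bochner formula you quote at the start is never used and can be dropped.
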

\begin{proof}
Using \eqref{E3}, we have
\begin{equation*}
	\langle \nabla|\nabla f|^2, Z \rangle
	= Z|\nabla f|^2 = Z \langle \nabla f,\nabla f \rangle 
	= 2\langle \nabla_Z f,\nabla f \rangle
	= 2 \langle\frac{1}{h}(R-\lambda)\nabla f, Z \rangle,
\end{equation*}
for any vector field Z, and this entails
\begin{equation}\label{E6}
	\frac{1}{2}\nabla|\nabla f|^2 = \frac{1}{h}(R-\lambda)\nabla f.
\end{equation}
Now, for any vector field $U$ and smooth scalar field $\phi$ on $M$,
\begin{equation}\label{E11}
	div ( \phi U) = \phi div (U) + \langle \nabla \phi, U \rangle.
\end{equation}
Therefore, taking the divergence on \eqref{E6}, and applying \eqref{E4} we obtain \eqref{E7}.
\\\noindent Again, since $\nabla g=0$, taking the covariant derivative with respect to an arbitrary vector field $X$ on both sides of \eqref{E1}, we get 
\begin{eqnarray}
\nonumber&&X \langle \nabla_Y \nabla f, Z \rangle 
	- \langle \nabla_{\nabla_X Y} \nabla f, Z \rangle
	- \langle \nabla_Y \nabla f, \nabla_X Z \rangle
	= (\nabla_X \frac{1}{h}(R-\lambda)) \langle Y, Z \rangle,\\
\text{and so, }
        && \langle \nabla^2_{X, Y} \nabla f, Z \rangle
           = \langle \nabla_X \nabla_Y \nabla f, Z \rangle
           - \langle \nabla_{\nabla_X Y} \nabla f, Z \rangle 
           = \langle (\nabla_X \frac{1}{h}(R-\lambda))Y, Z \rangle,
\end{eqnarray}
for any vector field $Y, Z$. As a consequence we have,
\begin{equation}
	\nabla^2_{X, Y} \nabla f = (\nabla_X \frac{1}{h}(R-\lambda))Y.
\end{equation}
Now, we calculate with a local frame $\{E_i\}$ which is parallel at a point $p \in M$:
\begin{eqnarray}
\nonumber	&&\langle Ric(\nabla f), X \rangle 
	=\langle \mathcal{R} (E_i, X)\nabla f, E_i \rangle 
	= \langle (\nabla^2_{E_i, X} - \nabla^2_{X, E_i})\nabla f, E_i \rangle \\
\nonumber	&=& \langle (\nabla_{E_i} \frac{1}{h}(R-\lambda))X, E_i \rangle - \langle (\nabla_X \frac{1}{h}(R-\lambda))E_i, E_i \rangle \\
\nonumber	&=& \langle (\nabla_{E_i} \frac{1}{h}(R-\lambda))E_i, X \rangle - (\nabla_X \frac{1}{h}(R-\lambda)) \langle E_i, E_i \rangle \\
	&=& \langle \nabla \frac{1}{h}(R-\lambda), X \rangle - n \langle \nabla \frac{1}{h}(R-\lambda), X \rangle. 
\end{eqnarray}
Therefore, we obtain \eqref{E8}.\\
Lastly, by second contracted Bianchi identity, $\frac{1}{2}\langle \nabla f,\nabla R \rangle = \frac{1}{2}\nabla f(R) = div Ric(\nabla f)$.
\\Also, we know that $div Ric(\nabla f) = div(Ric(\nabla f)) - \langle \nabla^2 f, Ric \rangle.$
\\\noindent Using equation \eqref{E2} we can write in tensorial notation that 
\begin{equation}
	\langle \nabla^2 f, Ric \rangle = g^{ik}g^{jl}\frac{1}{h}(R-\lambda) g_{ij} Ric_{kl} = \frac{1}{h}R(R-\lambda)\delta^j_k\delta^k_j = \frac{1}{h}R(R-\lambda).
\end{equation}
From the above relations and \eqref{E8}, we acquire \eqref{E10}.
\end{proof}

\section{Proof of the results}

\subsection{Proof of the Theorem \ref{T1}.}
\begin{proof}
First, we integrate the equation \eqref{E7} over $(M^n,g)$ so that
\begin{equation}\label{E12}
	n\int_M \frac{1}{h^2}(R-\lambda)^2 d\mu= -\int_M \langle \nabla f,\nabla \frac{1}{h}(R-\lambda) \rangle d\mu,
\end{equation}
where $d\mu$ is the volume element of $(M^n,g)$.\\
Also, from \eqref{E8} we can infer that
\begin{equation}\label{E13}
	\int_M Ric(\nabla f,\nabla f)d\mu = -(n-1)\int_M\langle\nabla \frac{1}{h}(R-\lambda), \nabla f \rangle d\mu.
\end{equation}
Therefore, using the preceding two equations we have
\begin{equation}
	\int_M Ric(\nabla f,\nabla f)d\mu =	n(n-1)\int_M \frac{1}{h^2}(R-\lambda)^2 d\mu \geq 0.
\end{equation}
In view of our assumption on Ricci curvature, we get $R = \lambda$, and so applying \eqref{E1} we conclude that the soliton becomes trivial.\\
For the last two assertions, we will first apply the formula \eqref{E11}, and equation \eqref{E4} to obtain
\begin{equation}
	div( \frac{1}{h}\lambda \nabla f) = \frac{n}{h^2}\lambda(R-\lambda) + \langle \nabla f,\nabla \frac{1}{h}\lambda \rangle.
\end{equation}
integrating the above equation over $M^n$ we get
\begin{equation}\label{E22}
	\int_M \langle \nabla f,\nabla \frac{1}{h}\lambda \rangle d\mu = -n \int_M \frac{1}{h^2}\lambda(R-\lambda) d\mu.
\end{equation}
Furthermore, using \eqref{E12} and \eqref{E22} we can write
\begin{equation}
	n\int_M \frac{1}{h^2}(R-\lambda)^2 d\mu =
	 -\int_M \langle \nabla f,\nabla \frac{1}{h}R \rangle d\mu 
	 - n \int_M \frac{1}{h^2}\lambda(R-\lambda) d\mu.
\end{equation}
i.e.,
\begin{equation}\label{E23}
	n\int_M \frac{1}{h^2}R(R-\lambda) d\mu = 
	-\int_M \langle \nabla f,\nabla \frac{1}{h}R \rangle d\mu. \leq 0,
\end{equation}
according to our hypothesis.\\
Also, in both of the cases $(ii)$ and $(iii)$, $R(R-\lambda) \geq 0$, and $R \neq 0$.\\
Therefore, we must have $R=\lambda$.
\end{proof}

\subsection{Proof of Theorem \ref{T2}.}
\begin{proof}
	Integrating by parts over $M^n$, we get
\begin{equation}
	\int_M R \Delta f d\mu = -\int_M\langle \nabla f,\nabla R \rangle d\mu.
\end{equation}
Now, multiplying both sides of \eqref{E4} by $R$ we integrate it over $M^n$, and substitute it in the above equation. Therefore,
\begin{equation}
	\int_M \frac{1}{h}R(R-\lambda) d\mu = -\frac{1}{n}\int_M\langle \nabla f,\nabla R \rangle d\mu.
\end{equation}
On the other side, integrating \eqref{E10} we have
\begin{equation}
	\int_M \frac{1}{h}R(R-\lambda) d\mu = -\frac{1}{2}\int_M\langle \nabla f,\nabla R \rangle d\mu.
\end{equation}
Since $n > 2$, we infer that
\begin{equation}\label{E14}
	\int_M\langle \nabla f,\nabla R \rangle d\mu = \int_M \frac{1}{h}R(R-\lambda) d\mu = 0.
\end{equation}
Now, when $h>0$, applying \eqref{E14} we get the following: 
\begin{eqnarray}
	0 \leq \int_M \frac{1}{h}(R-\lambda)^2 d\mu &=& 2\int_M \frac{1}{h}R(R-\lambda) d\mu - \int_M \frac{1}{h}R^2 d\mu + \int_M \frac{1}{h}\lambda^2 d\mu \\
	&=& \int_M \frac{1}{h}\lambda^2 d\mu - \int_M \frac{1}{h}R^2 d\mu.
\end{eqnarray}
Therefore, $R= \lambda$ holds, provided the inequality of case $(i)$ occurs. Hence, the soliton is trivial.
\\ \noindent In a similar way, we can prove this for the case $(ii)$.
\end{proof}

\subsection{Proof of Theorem \ref{T3}.}
\begin{proof}
	Consider the cut-off function $\eta_r \in C^\infty_0 (\mathbb{B}_{r}(p))$ as in \cite{CC96} satisfying,
\begin{equation}\label{cond1}
	\begin{cases} 
		0\leq \eta_r\leq 1 &\text{ everywhere} \\
		\eta_r = 1 & \text{ in }\mathbb{B}_{\frac{r}{2}}(p) \\
		supp(\eta_r) \subset \mathbb{B}_{r}(p) \\
		|\nabla \eta_r|^2 \leq \frac{d}{r^2} \\
		\Delta \eta_r \leq \frac{d}{r^2},
	\end{cases}
\end{equation}
where $\mathbb{B}_{r}(p)$ denotes the geodesic open ball of radius $r>0$  centered at $p \in M^n$, and $d$ is a positive constant.
\\\noindent Since, $\int_M div(\eta^2_r \frac{1}{h}(R-\lambda)\nabla f) = 0$, using \eqref{E4} we obtain
\begin{equation}\label{E15}
	n\int_M \eta^2_r \frac{1}{h^2}(R-\lambda)^2 d\mu
   + \int_M \eta^2_r \langle\nabla \frac{1}{h}(R-\lambda), \nabla f \rangle d\mu
   + 2\int_M \eta_r \frac{1}{h}(R-\lambda)\langle \nabla \eta_r,\nabla f \rangle d\mu 
   = 0.
\end{equation}
This implies
\begin{equation}\label{E16}
	n\int_M \eta^2_r \frac{1}{h^2}(R-\lambda)^2 d\mu
	\leq - \int_M \eta^2_r \langle\nabla \frac{1}{h}R, \nabla f \rangle d\mu
	- 2\int_M \eta_r \frac{1}{h}(R-\lambda)\langle \nabla \eta_r,\nabla f \rangle d\mu, 
\end{equation}
because of our hypothesis.\\
Furthermore, in the same way as we get \eqref{E15}, we can have
\begin{equation}
	-\int_M \eta^2_r \langle\nabla \frac{1}{h}R, \nabla f \rangle d\mu
	= n\int_M \eta^2_r \frac{1}{h^2}R(R-\lambda) d\mu
	+ 2\int_M \eta_r \frac{1}{h}R\langle \nabla \eta_r,\nabla f \rangle d\mu. 
\end{equation}
Therefore, applying the Cauchy-Schwarz's inequality, and using our assumptions $\frac{1}{h}R, |\nabla f| \in L^2(M), 0 \leq R \leq \lambda$, we acquire
\begin{eqnarray}\label{E17}
	\nonumber - \int_M \eta^2_r \langle\nabla \frac{1}{h}R, \nabla f \rangle d\mu
	&\leq&  2\int_M \eta_r \frac{1}{h}R\langle \nabla \eta_r,\nabla f \rangle d\mu\\
	&\leq& \nonumber 2 {(\int_M \eta_r^2 \frac{1}{h^2}R^2 d\mu)}^\frac{1}{2}
	          {(\int_M|\nabla\eta_r|^2|\nabla f|^2 d\mu)}^\frac{1}{2}\\
	&\leq&  \frac{d_1}{r},  \quad \text{for some constant $d_1>0$}.      
\end{eqnarray}
Again, applying the Cauchy-Schwarz's and  Young's inequalities, we obtain
\begin{eqnarray}\label{E18}
\nonumber &&- 2\int_M \eta_r \frac{1}{h}(R-\lambda)\langle \nabla \eta_r,\nabla f\rangle d\mu\\
	&\leq& \nonumber 2{(\int_M \eta_r^2 \frac{1}{h^2}(R-\lambda)^2d\mu)}^\frac{1}{2}
            {( \int_M|\nabla\eta_r|^2|\nabla f|^2 d\mu)}^\frac{1}{2}\\
    &\leq& \nonumber \int_M \eta_r^2 \frac{1}{h^2}(R-\lambda)^2 d\mu +       \int_M|\nabla\eta_r|^2|\nabla f|^2 d\mu\\
    &\leq&  \int_M \eta_r^2 \frac{1}{h^2}(R-\lambda)^2 d\mu + \frac{d_2}{r^2},  \quad \text{for some constant $d_2>0$}.
\end{eqnarray}
Hence, from \eqref{E16}, \eqref{E17}, and \eqref{E18} we can conclude that, as $r \to \infty$, $R = \lambda$ holds, and so by \eqref{E1} the soliton is trivial. 
\end{proof}

\subsection{Proof of Theorem \ref{T4}.}
\begin{proof}
As in the Theorem \ref{T3} we consider the same cut-off function  $\eta_r \in C^2_0 (\mathbb{B}_{r}(p))$ that fulfills the conditions in \eqref{cond1}. Then we have, $\Delta \eta^2_r \leq \frac{4d}{r^2}$.
\\\noindent Now, multiplying both sides of \eqref{E10} by $\eta^2_r$, and then integrating it over $M^n$, we can write
\begin{equation}
    \frac{1}{2} \int_M\eta^2_r\langle \nabla f,\nabla R \rangle d\mu  + (n-1)\int_M\eta^2_r\Delta \frac{1}{h}(R-\lambda) d\mu + \int_M\eta^2_r\frac{1}{h}R(R-\lambda) d\mu = 0,
\end{equation}
which implies that
\begin{equation}\label{E19}
    \frac{1}{2} \int_M\eta^2_r\langle \nabla f,\nabla R \rangle d\mu  + (n-1)\int_M \frac{1}{h}(R-\lambda)\Delta\eta^2_r d\mu + \int_M\eta^2_r\frac{1}{h}R(R-\lambda) d\mu = 0.
\end{equation}
Again, integrating by parts, and using \eqref{E4} we get
\begin{equation}\label{E20}
    \int_M\eta^2_r\langle \nabla f,\nabla R \rangle d\mu + n\int_M \eta^2_r \frac{1}{h}R(R-\lambda) d\mu + 2\int_M \eta_r R\langle \nabla \eta_r,\nabla f \rangle d\mu = 0.  
\end{equation}
Combining the above two equations with \eqref{E5}, and using Cauchy-Schwarz's inequality, we obtain
\begin{eqnarray}\label{E21}
\nonumber (\frac{n}{2}-1)\int_M \eta^2_r \frac{1}{h}R(R-\lambda) d\mu &=&
          (n-1)\int_M \frac{1}{h}(R-\lambda)\Delta\eta^2_r d\mu - 
          \int_M \eta_r R\langle \nabla \eta_r,\nabla f \rangle d\mu \\
\nonumber &\leq&
          (n-1){(\int_M|\frac{1}{h}(R-\lambda)\Delta\eta^2_r|^2 d\mu )}^\frac{1}{2} - \int_M \eta_r R\langle \nabla \eta_r,\nabla f \rangle d\mu \\
\nonumber &\leq&
          \frac{(n-1)}{\sqrt{n}}{(\int_M|\nabla^2 f|^2(\Delta \eta^2_r)^2 d\mu )}^\frac{1}{2} + 
          {(\int_M \eta_r^2 R^2 d\mu)}^\frac{1}{2}
          {(\int_M|\nabla\eta_r|^2|\nabla f|^2 d\mu)}^\frac{1}{2}.\\
          & &   
\end{eqnarray}
Using our $L^2$-integrability assumption, we can conclude that right hand side of \eqref{E21} tends to $0$ as $r \to \infty$.
\\\noindent Moreover, both of the cases $(i)$ and $(ii)$ implies
\begin{equation*}
	\frac{1}{h}R(R-\lambda) \geq 0,
\end{equation*}
and so we must have $R(R-\lambda) = 0$. Consequently, either $R=0$ or $R=\lambda$.
\end{proof}	

\subsection{Proof of Corollary \ref{C1}.}
\begin{proof}
	Since the soliton is non-trivial, so both the assumed cases implies  $\frac{1}{h}(R - \lambda) >0 $.\\
	Again, since $R$ is non-negative so using the $L^2$-integrability assumption we can conclude from \eqref{E21} that, $R=0$.
	Therefore, \eqref{E1} takes the form
	\begin{eqnarray}
		\nabla^2 f= \frac{1}{h}\lambda g, \quad
		\text{i.e.,} \quad \nabla^2 f = cg.
	\end{eqnarray}
    Lastly, we apply Tashiro's rigidity theorem \cite[Theorem 2.]{TY65} which yields that, a complete Riemannian manifold $(M,g)$ possessing a concircular scalar field $\phi$ such that $\nabla^2\phi= ag$, for some nonzero constant $a$, is isometric to a Euclidean space. Therefore,  we get the result.
\end{proof}

\section{acknowledgment}
The second author greatly acknowledges to The University Grants Commission, Government of India for the award of Junior Research Fellowship.

\end{document}